\documentclass[11pt,twoside]{amsart}
\usepackage{hyperref, tikz, fullpage}
\usepackage{placeins, enumerate,longtable}
\usepackage{amsthm, amssymb,amsmath,latexsym,amscd, mathrsfs}
\usepackage{mathtools}
\usepackage[utf8]{inputenc}
\usepackage[T1]{fontenc}
%%%%%%%%%%
\usepackage{enumitem,color}
        
        \topmargin=1.2cm
        \textheight = 8.3in
        \textwidth = 5.8in
        \setlength{\oddsidemargin}{.8cm}
        \setlength{\evensidemargin}{.8cm}
%%%%%%%%%%%%
\usepackage{caption}
\usepackage{lscape}
\newtheorem{theorem}{Theorem}[section]
\newtheorem{proposition}[theorem]{Proposition}

\theoremstyle{definition}
\newtheorem{definition}[theorem]{Definition}
\newtheorem{example}[theorem]{Example}

\numberwithin{equation}{section}
\setcounter{MaxMatrixCols}{20}

\newcommand{\Cal}[1]{\mathcal{#1}}

\newcommand{\Fq}{\mathbf{F}_q}

\newcommand{\Z}{\mathbb{Z}}
\newcommand{\R}{\mathbb{R}}

\newcommand{\rep}{\mathrm{Rep}}
%%%%%%%%%%%%%%%%%%%%%%%%%%%%%%%
\begin{document}
\title{Commuting probability and simultaneous conjugacy classes of commuting tuples in a group}
\author{Uday Bhaskar Sharma}
\email{udaybsharmaster@gmail.com}
\address{Tata Institute of Fundamental Research, Dr. Homi Bhabha Road, Navy Nagar, Colaba, Mumbai - 400005, India}
\author{Anupam Singh}
\email{anupamk18@gmail.com}
\address{Indian Institute of Science Education and Research (IISER) Pune,  Dr. Homi Bhabha Road Pashan, Pune 411008, India}
\today
\thanks{The second named author acknowledges support of the SERB Core Research Grant CRG/2019/000271 for this work.}
\subjclass[2010]{20E45, 20P05, 05E16}
\keywords{Commuting Probability, Simultaneous conjugacy, Branching Matrix}

%%%%%%%%%%%%%%%%%%%%%%%%%%%%%%%
\begin{abstract}
Let $G$ be a finite group. This expository article explores the subject of commuting probability in the group $G$ and its relation with simultaneous conjugacy classes of commuting tuples in $G$. We also point out the relevance of this topic in context of topology where similar problems are studied when $G$ is a Lie group. 
\end{abstract}
\maketitle
%%%%%%%%%%%%%%%%%%%%%%%%%%%
\section{Introduction}

Let $G$ be a finite group. If $G$ is not commutative then there are pairs of elements in $G$ which do not commute. The property of non-commutativity of elements of $G$ can be measured by commuting probability, which is defined by 
$$ cp_2(G) = \frac{ |\{(x,y)\in G\times G \mid x y = y x \}| }{|G|^2} $$
and more generally by $cp_n(G)$ (see Section~\ref{CP} for definition). Erd\"os and Turan (see~\cite{ET}) took probabilistic approach to group theory in a series of papers titled ``Statistical Group Theory''. Among several results they proved that $cp_2(G)=\frac{k(G)}{|G|}$ where $k(G)$ is the number of conjugacy classes in $G$. The commuting probabilities (under different names) have been studied for more general groups, for example, compact groups (see~\cite{AG,HR1, HR2,RE}). Following the work of Feit and Fine~\cite{FF} for general linear groups, this has been further investigated for varieties (see for example, see~\cite{BM}). There are various generalisations of commuting probability for groups and we refer to the survey article~\cite{DNP} for the same. Fulman and Guralnick studied this problem for Lie algebras over a finite field in~\cite{FG}. In~\cite{GR}, Guralnick and Robinson investigated how far this quantity can help in classifying finite groups. We present this in more detail in Section~\ref{CP}. In \cite{Sh}, Shalev looked at some more generalisations and suggested several research problems. In \cite{Ru}, Russo generalized the problem of commuting probabilities to relative commutativity degree of two subgroups of a group.

For finite groups, the computation of commuting probability can be done using the branching matrix $B_G$ for the group $G$. The branching matrix keeps information about how conjugacy classes of various centraliser subgroups of $G$ are obtained using the conjugacy classes of $G$ and this process is repeated iteratively by replacing $G$ with a centraliser subgroup of $G$. In~\cite{Sh1,Sh2,SS} this is computed for some small rank classical groups. In Section~\ref{BR} we explore this relation and compute the probabilities for $GL_2(\Fq), GL_3(\Fq), U_2(\Fq), U_3(\Fq)$ and $Sp_2(\Fq)$ using the branching matrix. In particular, the problem of computing $cp_n(G)$ is reduced to computing the $(n-1)$th power of the matrix $B_G$ (see Theorem~\ref{branchingmatrix}). We prove that the commuting probabilities $cp_n(U_2(\Fq))$ and $cp_n(GL_2(\Fq))$ are same (see Proposition~\ref{cp-gl-u}) for all $n$.

Witten (see~\cite{Wi1,Wi2}), while studying supersymmetry, required understanding of $Hom(\mathbb Z^n, G)$ for $n=2,3$ for a Lie group $G$. This started the study of topological properties of $Hom(\mathbb Z^n, G)$ and $Rep(\mathbb Z^n,G)$. We present this briefly in Section~\ref{CTT}. 

%\subsection*{Acknowledgments} 

%%%%%%%%%%%%%%%%%%%%%%%%%%%%%%%%%%%
\section{Commuting probability in groups}\label{CP}
Let $G$ be a finite group, and $n$ a positive integer. Let $G^{(n)}$ denote the set of $n$-tuples in $G$ which commute pairwise, i.e.,  
$$G^{(n)} = \{(g_1,\ldots,g_n)\in G^n \mid  g_ig_j = g_jg_i, \forall~ 1\leq i \neq j \leq n \}.$$
We denote the probability of finding a $n$-tuple (or $n$ elements in $G$ randomly) of elements of $G$, which commute pair-wise by 
$$cp_n(G) = \displaystyle\frac{|G^{(n)}|}{|G|^n}.$$
In particular, $cp_2(G)$ denotes the probability of finding a commuting pair in $G$. In literature, this quantity has different notation, for example, Gustafson (see~\cite{Gu}) denotes $cp_2(G)$ by $Pr(G)$, and  Lescot (see~\cite{Le}) denotes $cp_n(G)$ by $d_{n-1}(G)$ and $cp_2(G)$ by simply $d(G)$. The topic of the probability of finding a commuting $n$-tuple of elements of a group $G$ (including Lie groups) has been of considerable interest. As mentioned in the introduction, this subject started with a series of papers by Erd\"os and Turan on the subject of ``Statistical Group Theory'' and a paper by Gustafson. In~\cite[Theorem IV]{ET} (see also~\cite{Gu}), using a simple argument, it is proved that for a finite group $G$, 
\begin{equation}\label{cp2}
cp_2(G) = \displaystyle\frac{k(G)}{|G|}
\end{equation}
where $k(G)$ is the number of conjugacy classes in $G$.
For a non-abelian finite group $G$ it is easy to observe (see~\cite[Section 1]{Gu}) that 
$$cp_2(G) \leq \displaystyle\frac{5}{8}.$$ 
We mention some examples to illustrate this concept.
\begin{example}
For the quaternion group $Q_8$ and the dihedral group $D_4$ of order $8$ the number of conjugacy classes is $5$. Thus, $cp_2(Q_8)=cp_2(D_4)=\frac{5}{8}$. This shows that equality holds in the above equation for $cp_2(G)$, when $G$ is any non-abelian group of order $8$. 
\end{example}
\noindent Dixon, in 1970, proposed and later in 1973 proved, that for a finite non-abelian simple group $G$, the commuting probability $cp_2(G) \leq \displaystyle\frac{1}{12}$. 
\begin{example}
The alternating group $A_5$ has $5$ conjugacy classes and thus $cp_2(A_5)=\frac{5}{60}=\frac{1}{12}$. Thus, the equality holds for the group $A_5$.
\end{example}
\begin{example}
For the group $PSL_2(q)$ when $q$ is odd, the number of conjugacy classes $k(PSL_2(q))=\frac{(q+5)}{2}$. We get, $cp_2(PSL_2(q))=\frac{(q+5)}{(q+1)q(q-1)}$. The group $PSL_2(3)\cong A_4$ is not simple and $cp_2(PSL_2(3))=\frac{1}{3}$. For $q\geq 5$, a simple computation gives us $cp_2(PSL_2(q))\leq \frac{1}{12}$ with equality for $q=5$.
\end{example}

In~\cite{Le}, Lescot studied $cp_2$ up to isoclinism. Let us recall the definition of isoclinic groups. 
\begin{definition}[Isoclinic Groups]
Two groups $G$ and $H$ are said to be isoclinic, if there is an isomorphism $\phi \colon G/Z(G) \rightarrow H/Z(H)$, and an isomorphism $\psi \colon [G,G] \rightarrow [H,H]$, such that the following diagram commutes:
\[ \begin{CD}
G/Z(G) \times G/Z(G) @>\phi \times \phi  >> H/Z(H)\times H/Z(H)\\
 @ VVV                              @ VVV \\
[G,G] @>\psi >>  [H,H]
\end{CD} \]
where the vertical maps are the commutator maps.
\end{definition}
\noindent Lescot proved that~\cite[Lemma 2.4]{Le} if $G$ and $H$ are isoclinic, then $cp_2(G) = cp_2(H)$. Further, he proved that if $cp_2(G) \geq \frac{1}{2}$,  then (see \cite[Theorem 3.1]{Le}) $G$ is isoclinic to either $\{1\}$, an extra-special $2$-group, or to the symmetric group $S_3$. Guralnick and Robinson proved that (see~\cite[Theorem 11]{GR}) for a finite group $G$ if $cp_2(G) > \frac{3}{40}$ then either $G$ is solvable, or $G \equiv A_5\times T$, where $T$ is some abelian group. In~\cite{GR}, Guralnick and Robinson revisited this topic for finite groups, and proved several interesting results. They proved, for a finite solvable group $G$ with derived length $d \geq 4$, $cp_2(G) \leq \displaystyle\frac{4d-7}{2^{d+1}}$, and for a $p$-group $G$, with derived length $d \geq 2$, $cp_2(G) \leq \displaystyle\frac{p^d+p^{d-1} - 1}{p^{2d-1}}$. Lescot, along with Hung Ngoc Nguyen and Yong Yang~\cite{LNY} determined the supersolvability of finite groups, for groups $G$ with $cp_2(G)$ above some certain values. In \cite{Eb}, Eberhard proved a structure theorem for the set $\{cp_2(G)\mid G = \mathrm{finite~group}\} $ of values of commuting probabilities of all finite groups. In~\cite{CA} it is classified that a group having the property that elements of same order are conjugate then group is abelian.

Feit and Fine (see~\cite{FF}) computed the number of ordered pairs of commuting matrices in $GL_d(\mathbb F_q)$ which is 
$$P(d)=|GL_d(q)^{(2)}| = q^{d^2}f(d) \sum_{\pi(d)} \frac{q^{k(\pi)}}{f(b_1)f(b_2)\cdots f(b_d)}$$
where $f(t)= \displaystyle \prod_{i=1}^t \left(1-\frac{1}{q^i}\right)$ when $t\geq 1$ and $f(0)=1$, and $\pi(d)=1^{b_1}2^{b_2}\ldots d^{b_d}$ is a partition of $d$ written in power notation, and $k(\pi) = b_1 + \cdots + b_d$. A generating function for this is 
$$\sum_{d\geq 0} \frac{P(d) u^d}{|GL_d(q)|} = \prod_{i\geq 1} \prod_{l\geq 0} \frac{1}{\left( 1-\frac{u^i}{q^{l-1}}\right)}.$$
Fulman and Guralnick (in~\cite{FG}) have further computed this for the Lie algebra of finite unitary and symplectic groups, and have found generating functions.  

Let us now move on to describe what is known for more general $cp_n(G)$ when $n\geq 3$. 
Lescot (see~\cite[Lemma 4.1]{Le}) gave a recurrence formula:
\begin{equation}\label{lescot}
cp_n(G) = \frac{1}{|G|} \sum_{i = 1}^{k(G)}  \frac{cp_{n-1}\left (\mathcal Z_G(g_i)\right)}{|\mathscr{C}(g_i)|^{n-2}},
\end{equation}
where $\mathscr{C}(g_i)$ denotes the conjugacy class of $g_i$ in $G$, and $\mathcal Z_G(g_i)$ denotes the centralizer of $g_i$ in $G$. We use this formula to compute the commuting probability for the group $GL_2(\Fq)$.
\begin{example}\label{cpGL}
 For $G = GL_2(\Fq)$, the probabilities $cp_n(GL_2(\Fq))$ for $n$ up to $5$, are as follows,
\begin{center}
\begin{tabular}{|c|c|c|c|}\hline
$n$ & $cp_n(GL_2(\Fq))$ & $n$ & $cp_n(GL_2(\Fq))$ \\ \hline
&&& \\
$2$ & $\frac{1}{q^{2} - q}$  & $3$ & $\frac{q^{2} + q + 2}{q^{6} - 2 q^{4} + q^{2}}$\\
&&&\\
$4$ & $\frac{q^{3} + q^{2} + 4 q + 1}{q^{9} - 3 q^{7} + 3 q^{5} - q^{3}} $ & 
$5$ & $\frac{q^{4} + q^{3} + 7 q^{2} + q + 2}{q^{12} - 4 q^{10} + 6 q^{8} - 4 q^{6} + q^{4}} $\\ 
&&& \\ \hline
\end{tabular}
\end{center}
The data required for this computation is conjugacy classes and centralisers of $GL_2(\Fq)$ which can be easily computed.
\end{example}
In Section~\ref{BR}, we give a formula to compute these probabilities using, what we call, the branching matrix of the group. Before going further we briefly describe that the notion of commuting probability makes sense for a compact group as well.

%%%%%%%%%%%%%%
\subsection{Compact groups}

Let $G$ be a compact group with Haar measure $\mu$. Equip $G^n$ with the product measure $\mu^n$. Since the multiplication map $G^n\rightarrow G$ is continuous, the set $G^{(n)}$ is measurable. Define the commuting probability 
as follows: 
$$cp_n(G) = \displaystyle\frac{\mu^n(G^{(n)})}{\mu^n(G^n)}.$$
If we consider normalized Haar measure the formula would be simply $cp_n(G) = \mu^n(G^{(n)})$.
Gustafson (see~\cite{Gu} Section 2 Theorem) proved that for a non-abelian compact group $G$, the commuting probability $cp_2(G) \leq \displaystyle\frac{5}{8}$. In~\cite{RE, ER} several properties of $cp_n(G)$ (which is denoted as $d_{n+1}(G)$) are studied. In~\cite[Theorem 3.8]{RE} it is proved that $cp_n(G)$ takes same value on isoclinic compact groups, further, for a non-abelian compact group $G$,
$$cp_{n}(G) \leq \frac{3.2^{n-1}-1}{2^{2n-1}}$$
with equality for finite group of order $8$. 
Here are some more results from~\cite{ER}. Let $G$ be a non-abelian compact group, such that $G/Z(G)$ is a $p$-elementary abelian group of rank $2$ (where $p$ is a prime), then $cp_n(G) = \displaystyle\frac{p^n+p^{n-1} -1}{p^{2n-1}}$. Further, they proved that if $G$ is a non-abelian compact group and $G/Z(G)$ is a $p$-group ($p$, prime) satisfying $cp_n(G) =\displaystyle\frac{p^n+p^{n-1} -1}{p^{2n-1}}$, then $G/Z(G)$ is a $p$-elementary abelian group of rank $2$. They also proved that if $[G:Z(G)] = p^k$ then 
$$cp_n(G) = \frac{\left(p-1\right)\left(\displaystyle \sum_{i=0}^{n-2}p^{i(k-1)} + p^{(n-1)k - n + 2}\right)}{p^{(n-1)k + 1}}.$$
Hofmann and Russo (see~\cite{HR1, HR2}) attempted to get the structure of compact groups given certain conditions on $cp_n(G)$. In particular, they have proved the following result (see~\cite[Theorem 1.2]{HR1}). Let $G$ be a compact group with Haar measure $\mu$ and FC-center $F$. Then, $cp_2(G)>0$ if and only if $F$ is open in $G$, which is, if and only if the center $\mathcal Z(F)$ of $F$ is open in $G$. Recall, the FC-center of a group is the set of elements whose conjugacy class is finite.

%%%%%%%%%%%%%%%%%%%%%%%%%%%%%%%%%%%%
\section{Simultaneous conjugacy and branching rules}\label{BR}

For a finite group $G$, we can analyse the set $G^{(n)}$ (defined in Section~\ref{CP}) under the simultaneous conjugation action on it by $G$, that is, we have 
\begin{eqnarray*} G\times G^{(n)} &\rightarrow& G^{(n)} \ \ \ \ \ \ {\rm given\ by}\\ 
 g.(x_1,\ldots, x_n) & = & (gx_1g^{-1}, \ldots, gx_ng^{-1}).
 \end{eqnarray*}
Let $c_G(n)$ denote the number of orbits under this action. These orbits are also called simultaneous conjugacy classes in $G^{(n)}$. The main problem here is to determine $c_G(n)$. The above question can be also asked for a finite dimensional algebra $\Cal{A}$ over the finite field $\Fq$ under the action of  $\Cal{A}^* $, the group of units of $\Cal{A}$. The number of orbits is denoted as $c_{\Cal A}(n)$ in this case. The number $c_G(n)$ is determined for some classical groups of small rank by the authors in~\cite{Sh1,SS}. When $n=2$, 
$$G^{(2)} = \displaystyle \bigcup_{x\in G} \{x\}\times \mathcal Z_G(x),$$ thus, to get size of this set we need to look at the elements which are in same {\em $z$-classes} also called {\em types}. Two elements are said to be in same $z$ class if their centralisers are conjugate. The computation for $G^{(n)}$ can be reduced to the centraliser subgroups up to conjugacy. This usually simplifies the computation as one has to deal with much fewer centralizer subgroups. In \cite{Sh1,SS}, $c_G(n)$ is computed for $GL_2(\Fq)$, $GL_3(\Fq)$, $U_2(\Fq)$, $U_3(\Fq)$, $Sp_2(\Fq)$ and $Sp_4(\Fq)$. 

Using the data on $z$-classes, and the centralizers within representatives of the $z$-classes, the enumeration of simultaneous conjugacy of commuting elements is reduced to determining what is called the branching table/matrix $B_G$ of the group $G$. Before going any further, let us understand what branching means. Let $\mathscr C(x)$ be a conjugacy class of $x$ in $G$ and $\mathcal Z_G(x)$ be its centralizer. The conjugacy classes of $\mathcal Z_G(x)$ are called branches of the conjugacy class $\mathscr C(x)$. Now, since $\mathcal Z_G(x)$ is a proper subgroup of $G$, some of its conjugacy classes could be a class of $G$ and some are not (for example the ones which are obtained by splitting a class of $G$). If $\mathcal Z_G(x)$ has a conjugacy class which is not a class of $G$, it's called a new one. This process is repeated iteratively for centralizer subgroups within centralizer subgroups till we do not get a new one. Notice that we need to store data for an element which is representative of its $z$-class. The branching table has its rows and columns indexed by the types of the conjugacy classes of $G$ as well as the new ones (that is $z$-classes in $G$ as well as the same within centralizers iteratively).  For finite classical groups, it leads to some interesting combinatorics.

In the case of finite groups, computation of the branching matrix $B_G$ is helpful in explicitly determining $cp_n(G)$ for $n \geq 2$. In particular, Lescot's formula given in Equation~\ref{lescot} can be recasted using simultaneous conjugacy classes of commuting elements. The following theorem is proved in~\cite[Theorem 1.1]{SS}. 
\begin{theorem}\label{branchingmatrix}
Let $G$ be a finite group and $n\geq 2$, an integer. The probability that a $n$-tuple of elements of $G$ commute is 
$$cp_n(G) = \frac{c_G(n-1)}{|G|^{n-1}}=  \frac{\mathbf{1}B_G^{n-1}e_1}{|G|^{n-1}}$$
where $\mathbf{1}$ is a row matrix, with all $1$'s, and $e_1$ is a column matrix with first entry $1$, and $0$ elsewhere.
\end{theorem}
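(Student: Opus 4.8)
The plan is to prove the two equalities separately: first the probabilistic identity $cp_n(G) = c_G(n-1)/|G|^{n-1}$ by a counting argument, and then the matrix formula $c_G(n-1) = \mathbf{1}B_G^{n-1}e_1$ by turning the branching recursion into a product of matrices.

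For the first equality I would apply the orbit-counting (Burnside) lemma to the simultaneous conjugation action of $G$ on $G^{(n-1)}$, whose number of orbits is by definition $c_G(n-1)$. Thus $c_G(n-1) = \frac{1}{|G|}\sum_{g\in G}|\mathrm{Fix}(g)|$, where $\mathrm{Fix}(g)$ is the set of pairwise-commuting $(n-1)$-tuples $(x_1,\dots,x_{n-1})$ fixed by conjugation by $g$, i.e. with $g$ commuting with every $x_i$. Summing over $g$ then counts exactly those $n$-tuples $(x_1,\dots,x_{n-1},g)$ whose first $n-1$ entries commute pairwise and whose last entry commutes with each of them --- that is, precisely the set $G^{(n)}$. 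Hence $\sum_{g}|\mathrm{Fix}(g)| = |G^{(n)}|$, giving $c_G(n-1) = |G^{(n)}|/|G| = |G|^{n-1}cp_n(G)$, which is the claim.

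For the second equality the engine is a ``peel off the first coordinate'' recursion. Since sending a commuting tuple to the conjugacy class of its first entry is $G$-equivariant, I would count $G$-orbits on $H^{(k)}$ fibre by fibre over the conjugacy classes of $H$: the orbits lying over the class of $x$ are in bijection, via the standard orbits-over-a-fibre argument using that the stabilizer of $x$ is $\mathcal{Z}_H(x)$, with the $\mathcal{Z}_H(x)$-orbits on $\mathcal{Z}_H(x)^{(k-1)}$. This yields $c_H(k) = \sum_{[x]} c_{\mathcal{Z}_H(x)}(k-1)$, the class-sum version of Lescot's recurrence~\eqref{lescot}. Because $c(\,\cdot\,)$ is an isomorphism invariant and elements of the same type have conjugate (hence isomorphic) centralizers, the inner term depends only on the type of $[x]$; grouping the classes of $H$ by type gives $c_{H_j}(k) = \sum_i (B_G)_{ij}\,c_{H_i}(k-1)$, where $H_1=G, H_2,\dots$ are the type representatives and $(B_G)_{ij}$ is the number of conjugacy classes of $H_j$ of type $i$.

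Writing $u_k$ for the column vector with entries $c_{H_j}(k)$, this is exactly $u_k = B_G^{T}u_{k-1}$, with base case $u_0 = \mathbf{1}^{T}$ since each group has a unique (empty) $0$-tuple class. Iterating gives $u_{n-1} = (B_G^{n-1})^{T}\mathbf{1}^{T}$, and reading off the first coordinate (the type of $G$) yields $c_G(n-1) = (u_{n-1})_1 = \mathbf{1}B_G^{n-1}e_1$, as desired. I expect the main obstacle to lie here rather than in the counting: one must verify that the system of types is closed under passing to centralizers iteratively --- so that the \emph{same} finite square matrix $B_G$ governs every level of the recursion --- and must pin down the indexing and transpose conventions (in particular that $G$ is type $1$, which explains the appearance of $e_1$) so that the recursion assembles cleanly into a single matrix power. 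The orbit-over-fibre bijection and the isomorphism-invariance of $c(\,\cdot\,)$ are the technical hinges that make the reduction from conjugacy classes to types legitimate.
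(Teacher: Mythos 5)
Your proposal is correct, but it takes a genuinely different route from the paper, most notably for the first equality. The paper (following \cite{SS}, Section 7) obtains $cp_n(G)=c_G(n-1)/|G|^{n-1}$ inductively: the base case is the Erd\"os--Turan identity $cp_2(G)=k(G)/|G|$, and the inductive step substitutes the recursion of Equation~\ref{EComm1} into Lescot's recurrence (Equation~\ref{lescot}), exactly as illustrated in the text for $n=3$. You instead apply Burnside's orbit-counting lemma once to the simultaneous-conjugation action on $G^{(n-1)}$, noting that $\mathrm{Fix}(g)=\mathcal Z_G(g)^{(n-1)}$ and that summing these fixed-point sets over $g\in G$ enumerates exactly $G^{(n)}$; this is a one-shot, non-inductive argument that bypasses Lescot's formula entirely and is the natural $n$-fold generalization of the Erd\"os--Turan argument. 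The paper's route has the expository virtue of displaying the link with Lescot's recurrence, which the article wants to emphasize; yours is shorter and self-contained. For the second equality the paper gives no argument at all (it is cited to \cite{SS}, Lemma 7.1), so your derivation fills in what the exposition leaves out: the orbits-over-a-fibre bijection giving $c_H(k)=\sum_{[x]}c_{\mathcal Z_H(x)}(k-1)$, the grouping of classes by type using isomorphism-invariance of $c(\cdot)$, and the resulting linear recursion $u_k=B_G^{T}u_{k-1}$ with $u_0=\mathbf{1}^{T}$, whose first coordinate after $n-1$ steps is $\mathbf{1}B_G^{n-1}e_1$. Your transpose bookkeeping and indexing are consistent with the paper's convention (column $j$ of $B_G$ records the branching of the $j$-th type, as one can check against $B_{U_2(\Fq)}$, whose first column sums to $(q+1)^2=k(U_2(\Fq))$), and the caveat you flag --- that the set of types must be finite and closed under passing to centralizers, so a single square matrix governs every level --- is exactly the content of the construction of $B_G$ that the paper assumes as given.
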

\noindent We explain how to get the first formula for $n=2$ and $3$. Let $k$ be the number of conjugacy classes in $G$. Let $g_1, \ldots, g_k$, denote the representatives of the conjugacy classes in $G$. The following iterative formula of $c_G(n)$ is immediate. We set $c_G(0)=1$, thus for $n\geq 1$ we have,
\begin{equation}\label{EComm1}
c_G(n) = \sum_{i=1}^k c_{\mathcal Z_G(g_i)}(n-1).
\end{equation}
Now, for $n = 2$, we know from Equation~\ref{cp2} that $cp_2(G) = \frac{k}{|G|}$, which is equal to $\frac{c_G(1)}{|G|}$. We demonstrate this for $n=3$ using Lescot's formula given in Equation~\ref{lescot}. 
\begin{eqnarray*}
cp_3(G)&=& \frac{1}{|G|}\sum_{i=1}^k \frac{cp_2(\mathcal Z_G(g_i)) }{|\mathscr{C}(g_i)|} = \frac{1}{|G|}\sum_{i=1}^k \frac{1}{|\mathscr{C}(g_i)|}\frac{c_{\mathcal Z_G(g_i)}(1)}{|\mathcal Z_G(g_i)|}\\
&=& \frac{1}{|G|}\sum_{i=1}^k \frac{c_{\mathcal Z_G(g_i)}(1)}{|G|} = \frac{c_G(2)}{|G|^2}~\text{~from Equation~\ref{EComm1}}.
\end{eqnarray*}
In fact, the proof of the first equality of the above theorem is along the similar lines which is  given in~\cite{SS}, Section 7. The second equality is proved in~\cite{SS} Lemma 7.1. Below we give some examples of $cp_n(G)$ for some small rank classical groups where $B_G$ is already known.

In~\cite{Gu2}, Guralnick proved that for a field $F$ and two commuting matrices $A,B \in M_d(F)$, the algebra $\Cal{A}(A,B)$ generated by $A$ and $B$, is of dimension $\dim(\Cal{A})\leq d$. In general he looked at the dimension of the space $C(m,d)$, a commutative subalgebra of $M_d(F)$ generated by $m$ elements, and its irreducibility (or non-irreducibility) and dimension etc. In~\cite{Sh1}, the first named author studied the set $\Cal{A}^{(n)}$ of $n$-tuples of commuting elements of $\Cal{A}$ for $\Cal{A} = M_d(\Fq)$, for $d = 2,3,4 $. In this paper, it was proved (\cite[Theorem~1.5]{Sh1}) that $c_{M_d(\Fq)}(n)$ (denoted $c_{d,n}(q)$ in the paper) is a polynomial in $q$ with non-negative integer coefficients for $d = 2,3,4$ and $n \geq 1$. This led to some questions about generalising the mentioned result for $d \geq 5$, i.e., to check if $c_{M_d(\Fq)}(n)$ is a polynomial with non-negative integer coefficients for $d \geq 5$. 

In~\cite{SS}, we looked at the unitary groups $U_d(\Fq)$ for $d = 2,3$, and the symplectic groups $Sp_{2l}(\Fq)$ for $l = 1, 2$. We recall the branching matrix and compute the commuting probabilities using SAGEmath~\cite{Sagemath} up to $n\leq 5$. We also observed an interesting duality between the formulas of $c_{U_2(\Fq)}(n)$ and $c_{GL_2(\Fq)(n)}$, using the branching tables, which leads to proving that commuting probability for these two groups are same. 

\begin{example}
For $G = U_2(\Fq)$, and $n \geq 2$, the branching matrix is (see~\cite[Theorem 1.2]{SS})
$$B_{U_2(\Fq)} = \left(\begin{array}{rrrr}
q + 1 & 0 & 0 & 0 \\
q + 1 & q(q+1) & 0 & 0 \\
\frac{1}{2} (q+1)q & 0 & (q+1)^2 & 0 \\
\frac{1}{2} (q^2-q-2) & 0 & 0 & q^{2} - 1
\end{array}\right)$$
and the probabilities for $n\leq  5$ are mentioned in the table below,
\begin{center}
\begin{tabular}{|c|c|c|c|}\hline
$n$ & $cp_n(U_2(\Fq))$ & $n$ & $cp_n(U_2(\Fq))$ \\ \hline &&&\\
$2$ & $\frac{1}{q^{2} - q}$ & $3$ & $\frac{q^{2} + q + 2}{q^{6} - 2 q^{4} + q^{2}}$\\
&&&\\
$4$ & $\frac{q^{3} + q^{2} + 4 q + 1}{q^{9} - 3 q^{7} + 3 q^{5} - q^{3}} $ & 
$5$ & $\frac{q^{4} + q^{3} + 7 q^{2} + q + 2}{q^{12} - 4 q^{10} + 6 q^{8} - 4 q^{6} + q^{4}} $\\ &&& \\ \hline
\end{tabular}
\end{center}
Notice that for $G = GL_2(\Fq)$ (see Example~\ref{cpGL}) and $G = U_2(\Fq)$ when $n\leq 5$, the probabilities $cp_n(G)$ are the same even though the branching matrices are not same. In fact, this is true for all $n$. 
\end{example}
\begin{proposition}\label{cp-gl-u}
For $n\geq 2$, $cp_n(GL_2(\Fq)) = cp_n(U_2(\Fq))$. 
\end{proposition}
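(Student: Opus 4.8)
The plan is to apply Theorem~\ref{branchingmatrix} to both groups and to exploit the fact that both branching matrices are lower triangular of a very special shape. First I would record the branching matrix of $GL_2(\Fq)$, which follows from the standard description of its conjugacy classes (scalars, regular unipotent-times-scalar, split tori, non-split tori) exactly as in~\cite{Sh1}; it is
$$B_{GL_2(\Fq)} = \begin{pmatrix} q-1 & 0 & 0 & 0 \\ q-1 & q(q-1) & 0 & 0 \\ \tfrac12(q-1)(q-2) & 0 & (q-1)^2 & 0 \\ \tfrac12 q(q-1) & 0 & 0 & q^2-1 \end{pmatrix}.$$
Both $B_{GL_2(\Fq)}$ and the given $B_{U_2(\Fq)}$ have nonzero entries only in the first column and on the diagonal, because the centralizers attached to the three non-central types are abelian and hence each contributes only to its own diagonal type.

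For such a matrix with first column $(a,b_2,b_3,b_4)^{t}$ and diagonal $(a,d_2,d_3,d_4)$, an easy induction on $m$ gives the closed form
$$\mathbf{1}\,B^{m}e_1 = a^{m} + \sum_{i=2}^{4} b_i\,\frac{a^{m}-d_i^{m}}{a-d_i}.$$
With $m=n-1$, Theorem~\ref{branchingmatrix} then yields $cp_n(G) = |G|^{-(n-1)}\,\mathbf{1}B_G^{\,n-1}e_1$, so it remains to compare the two resulting rational functions.

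Here is where the real content lies. Dividing through by $|G|^{n-1}$, I would rewrite $cp_n(G)$ purely in terms of the normalized diagonal ratios $a/|G|,\,d_i/|G|$ and the scalar coefficients $b_i/(a-d_i)$. A direct check (the routine step) shows that for \emph{both} groups
$$\frac{b_2}{a-d_2} = \frac{-1}{q-1}, \qquad \frac{b_3}{a-d_3} = \frac{b_4}{a-d_4} = -\tfrac12, \qquad 1+\sum_{i=2}^4 \frac{b_i}{a-d_i} = \frac{-1}{q-1},$$
and, using $|GL_2(\Fq)| = q(q-1)^2(q+1)$ and $|U_2(\Fq)| = q(q+1)^2(q-1)$, that the normalized diagonal entries form the same multiset
$$\Bigl\{\tfrac{a}{|G|},\tfrac{d_2}{|G|},\tfrac{d_3}{|G|},\tfrac{d_4}{|G|}\Bigr\} = \Bigl\{\tfrac{1}{q(q^2-1)},\tfrac{1}{q^2-1},\tfrac{1}{q(q-1)},\tfrac{1}{q(q+1)}\Bigr\}$$
for both groups, the only difference being that the values $\tfrac{1}{q(q-1)}$ and $\tfrac{1}{q(q+1)}$ are attached to $d_3,d_4$ in opposite order (this is the Ennola-type $q\mapsto -q$ symmetry between $GL_2$ and $U_2$). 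Substituting everything gives the single common formula
$$cp_n(G) = \frac{1}{q-1}\left[\Bigl(\tfrac{1}{q^2-1}\Bigr)^{n-1} - \Bigl(\tfrac{1}{q(q^2-1)}\Bigr)^{n-1}\right] + \frac12\left[\Bigl(\tfrac{1}{q(q-1)}\Bigr)^{n-1} + \Bigl(\tfrac{1}{q(q+1)}\Bigr)^{n-1}\right],$$
which is symmetric in the swapped pair and therefore identical for $GL_2(\Fq)$ and $U_2(\Fq)$.

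The main obstacle is not the matrix-power bookkeeping but verifying these coincidences: one must confirm both that the coefficients $b_i/(a-d_i)$ agree termwise and that the normalized diagonal multisets agree, since the final expression is only symmetric under the precise $d_3\leftrightarrow d_4$ exchange produced by the differing torus orders in the two groups. Once these two checks are in place the proposition is immediate, and as a sanity check the formula at $n=2$ collapses to $\tfrac{1}{q^2-q}$, matching the tables.
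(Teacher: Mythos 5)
Your proof is correct, but it takes a genuinely different route from the paper's. The paper never touches the branching matrices in its argument: it quotes the closed-form ``duality'' expressions for $c_{GL_2(\Fq)}(n)$ and $c_{U_2(\Fq)}(n)$ from \cite[Corollary 1.4(1)]{SS} (alternating combinations of weighted sums over compositions $a+b+c+d=n$), divides by the group orders, and verifies that after cancellation the two rational expressions coincide. You instead exploit the arrow shape of $B_{GL_2(\Fq)}$ and $B_{U_2(\Fq)}$ --- nonzero entries only in the first column and on the diagonal, which is correct because every proper centralizer type in these groups is abelian --- prove by induction the closed form $\mathbf{1}B^m e_1 = a^m + \sum_{i} b_i\,(a^m - d_i^m)/(a-d_i)$, and reduce the proposition to two finite checks: that the coefficients $b_i/(a-d_i)$ agree termwise (both groups give $-1/(q-1),\,-\tfrac12,\,-\tfrac12$), and that the normalized diagonals form the same multiset with only the pair $\tfrac{1}{q(q-1)},\tfrac{1}{q(q+1)}$ swapped, the swap being invisible because those two terms enter with the same coefficient $\tfrac12$. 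I verified all of these computations, and your resulting closed formula reproduces the tabulated values $\tfrac{1}{q^2-q}$ at $n=2$ and $\tfrac{q^2+q+2}{q^6-2q^4+q^2}$ at $n=3$. The trade-off: you must import $B_{GL_2(\Fq)}$, which the paper never displays (though it follows from the standard conjugacy-class data of $GL_2(\Fq)$ as in \cite{Sh1}, exactly as you assert), whereas the paper leans on an already-published formula; in exchange your argument is structurally more illuminating --- it isolates the Ennola-type exchange of the two torus types as the only difference and shows it sits inside a symmetric sum --- and it yields an explicit closed formula for $cp_n$ of both groups for all $n$, which the paper's cancellation argument does not produce.
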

\begin{proof}
First we note that $|U_2(\Fq)| = q(q+1)(q^2-1) = q(q+1)^2(q-1)$, and $|GL_2(\Fq)| = q(q-1)(q^2-1)= q(q-1)^2(q+1)$, and, there is a duality between the formulas of $c_{GL_2(\Fq)}(n)$ and $c_{U_2(\Fq)}(n)$ for $n\geq 1$ (see~\cite[Corollary 1.4(1)]{SS}). 
So, for $n \geq 2$, we have the probability, 
\begin{center}
$
\begin{array}{l}
cp_{n+1}(GL_2(\Fq)) = 
\frac{1}{q^n(q-1)^{2n}(q+1)^n} \left(\begin{matrix}
  \displaystyle\sum_{a+b+c+d = n}(q-1)^{a+2b+c+d}q^c(q+1)^d \\
 - 2q(q-1)\left(\displaystyle\sum_{a+b+c+d = n-1} (q-1)^{a+2b+c+d} q^c(q+1)^d\right)\\
~ + q^2(q-1)^2\left(\displaystyle\sum_{a+b+c+d = n-2}(q-1)^{a+2b+c+d} q^c(q+1)^d\right)  \\
~- (q-1)^3 \left(\displaystyle\sum_{a+b+c+d = n-3}(q-1)^{a+2b+c+d} q^c(q+1)^d \right)
\end{matrix}\right).
\end{array}
$
\end{center}
After doing the cancellations in the above equation, we get:
\begin{eqnarray*}
cp_{n+1}(GL_2(\Fq)) &=&  \sum_{a+b+c+d = n}(q-1)^{b-n}q^{c-n}(q+1)^{d-n} \\ &-& 2\left(\sum_{a+b+c+d = n-1} (q-1)^{b-n} q^{c+1-n}(q+1)^{d-n}\right) \\
& +& \left(\sum_{a+b+c+d = n-2}(q-1)^{b-n} q^{c+2-n}(q+1)^{d-n}\right) \\ &-& \left(\sum_{a+b+c+d = n-3}(q-1)^{b-n} q^{c-n}(q+1)^{d-n} \right).
\end{eqnarray*}
In the case of $U_2(\Fq)$, we have:
$$
 \begin{array}{l}
 cp_{n+1}(U_2(\Fq)) =  \frac{1}{q^n(q+1)^{2n}(q-1)^n} \left(\begin{matrix} \displaystyle\sum_{a+b+c+d = n} (q+1)^{a+2d+b+c}(q-1)^b q^c \\
 ~ - 2q(q+1)\left(\displaystyle\sum_{a+b+c+d = n-1}(q+1)^{a+2d+b+c}(q-1)^b q^c\right)\\
 ~ + q^2(q+1)^2\left(\displaystyle\sum_{a+b+c+d = n-2}(q+1)^{a+2d+b+c}(q-1)^b q^c\right)  \\
 ~- (q+1)^3 \left(\displaystyle\sum_{a+b+c+d = n-3}(q+1)^{a+2d+b+c}(q-1)^b q^c \right)
 \end{matrix}\right)\\
 \end{array}
 $$
 After doing the cancellations in the above equation, we see that for $n \geq 2$, $cp_n(GL_2(\Fq)) = cp_n(U_2(\Fq))$.
\end{proof}

\begin{example}
For $G = Sp_2(\Fq)$ the branching matrix is (see~\cite[Theorem 1.3]{SS})
$$B_{Sp_2(\Fq)} = \left(\begin{array}{rrrrr}
2 & 0 & 0 & 0 &0\\
2 & 2q & 0 & 0 & 0 \\
2 & 0 & 2q & 0 &0 \\
\frac{1}{2}(q-3) & 0 & 0 & q- 1&0\\
\frac{1}{2}(q-1) & 0 & 0 & 0&q+1
\end{array}\right)$$
and the commuting probability is,
\begin{center}
\begin{tabular}{|c|c|c|c|}\hline
$n$ & $cp_n(Sp_2(\Fq))$ & $n$ & $cp_n(Sp_2(\Fq))$ \\ \hline
&&&\\
$2$ & $\frac{q + 4}{q^{3} - q}$ & $3$ & $\frac{q^{2} + 8 q + 9}{q^{5} - 2 q^{3} + q}$\\
&&&\\
$4$ & $\frac{q^{3} + 16 q^{2} + 19 q + 16}{q^{7} - 3 q^{5} + 3 q^{3} - q} $ & $5$ & $\frac{q^{4} + 32 q^{3} + 38 q^{2} + 32 q + 33}{q^{9} - 4 q^{7} + 6 q^{5} - 4 q^{3} + q}$.\\
&&&\\  \hline
\end{tabular}
\end{center}
\end{example}

%\iffalse
\begin{example}
For $G = GL_3(\Fq)$ the branching matrix is,
$$B_{GL_3(\Fq)} = \left(\begin{smallmatrix}
q - 1 & 0 & 0 & 0 & 0 & 0 & 0 & 0 \\
q - 1 & q(q-1) & 0 & 0 & 0 & 0 & 0 & 0 \\
(q-1)(q-2) & 0 & (q-1)^2 & 0 & 0 & 0 & 0 & 0 \\
q - 1 & q^{2} - 1 & 0 & q^2(q-1) & 0 & 0 & 0 & 0 \\
(q-1)(q-2)& (q-1)(q-2)q & (q-1)^2 & 0 & q(q-1)^2 & 0 & 0 & 0 \\
{q-1 \choose 3} & 0 & (q-1){q-1 \choose 2} & 0 & 0 & (q-1)^3 & 0 & 0 \\
(q-1){q \choose 2}& 0 & (q-1){q \choose 2} & 0 & 0 & 0 & (q-1)^2(q+1) & 0 \\
\frac{(q^3-q)}{3} & 0 & 0 & 0 & 0 & 0 & 0 & q^{3} - 1
\end{smallmatrix}\right)$$
and the values of $cp_n(G)$ are,

\begin{center}
\begin{tabular}{|c|c|c|c|}\hline
$n$ & $cp_n(GL_3(\Fq))$ & $n$ & $cp_n(GL_3(\Fq))$\\ \hline
&&&\\
$2$ & $\frac{1}{(q - 1)^{2} q^{2} (q^{2} + q + 1)}$ & $3$ & $\frac{q^{4} + q^{3} + q^{2} + 4}{(q + 1)^{2}  (q - 1)^{4}  q^{6}  (q^{2} + q + 1)^{2}}$\\
&&&\\
$4$ & $\frac{q^{6} + q^{5} + 2 q^{4} + q^{3} + 8 q^{2} + 4 q + 1}{(q + 1)^{3} (q - 1)^{6}  q^{9}(q^{2} + q + 1)^{3}} $ & 
$5$ & $\frac{q^{8} + q^{7} + 4 q^{6} + 23 q^{4} - 2 q^{3} + 13 q^{2} - q + 4}{(q + 1)^{4}  (q - 1)^{8} q^{12}  (q^{2} + q + 1)^{4}} $.\\ 
&&&\\ \hline
\end{tabular}
\end{center}
\end{example}

\begin{example}
For $G = U_3(\Fq)$, the branching matrix is,
$$B_{U_3(\Fq)} = \left(\begin{smallmatrix}
q+1 & 0 & 0 & 0 & 0 & 0 & 0 & 0 \\
q+1 & q(q+1) & 0 & 0 & 0 & 0 & 0 & 0 \\
q(q+1) & 0 & (q+1)^2 & 0 & 0 & 0 & 0 & 0\\
q+1 & q^2 - 1 & 0 & (q+1)q^2 & 0 & 0 & 0 & 0 \\
q(q+1) & (q+1)q^2 & (q+1)^2 & 0 & q^2(q+1) & 0 & 0 & 0 \\
{q+1}\choose 3 & 0 &(q+1){{q+1}\choose 2} & 0 & 0 & (q+1)^3 & 0 & 0 \\
\frac{(q+1)(q^2-q-2)}{2} & 0 & \frac{(q+1)(q^2-q-2)}{2}& 0 & 0 & 0 & (q+1)(q^2-1)& 0\\
\frac{q^3-q}{3} & 0 & 0 & 0& 0& 0& 0 &  q^3 + 1 \end{smallmatrix}\right),$$ and the probabilities for up to $k = 5$ are mentioned in the table below,

\begin{center}
\begin{tabular}{|c|c|c|c|}\hline
$n$ & $cp_n(U_3(\Fq))$ & $n$ & $cp_n(U_3(\Fq))$ \\ \hline
&&&\\
$2$ & $\frac{q^{2} + q + 2}{(q - 1) (q + 1)^{2}q^{3} (q^{2} - q + 1)}$ & $3$ & $\frac{q^{4} + q^{3} + 5 q^{2} + 4 q + 2}{(q - 1)^{2} (q + 1)^{4} q^{6}  (q^{2} - q + 1)^{2}}$\\
&&&\\
$4$ & $\frac{q^{6} + q^{5} + 8 q^{4} + 9 q^{3} + 14 q^{2} + 4 q + 1}{(q - 1)^{3}  (q + 1)^{6}  q^{9}  (q^{2} - q + 1)^{3}} $ & $5$ & $\frac{q^{8} + q^{7} + 12 q^{6} + 16 q^{5} + 37 q^{4} + 20 q^{3} + 17 q^{2} + 5 q + 2}{(q - 1)^{4}  (q + 1)^{8}  q^{12}  (q^{2} - q + 1)^{4}} $.\\ &&&\\ \hline
\end{tabular}
\end{center}
\end{example}

\subsection{Modular Representation theory}
In~\cite{PM}, Pforte and Murray studied representations of the group $\mathbb Z/2\mathbb Z \times \mathbb Z/2\mathbb Z$ in characteristic $2$. Notice that this amounts to determining a pair of commuting order $2$ matrices in $GL_d(F)$ where $F$ is a field of characteristic $2$. Further, determining orthogonal and symplectic representations amounts to determining such pairs in orthogonal and symplectic groups. If we want to have non-equivalent representations, we need such pair of matrices up to simultaneous conjugacy. Thus, the problem of determining non-equivalent representations (orthogonal, symplectic) of $(\mathbb Z/ p^n\mathbb{Z} )$ is closely related to the topic of determining $n$-tuples of commuting elements of order $p$ up to simultaneous conjugacy in $GL_d(F)$ ($O_d(F), Sp_{2l}(F)$).

%%%%%%%%%%%%%%%%%%%%%%%%%%%%%%%%%%%%%
\section{Commuting tuples and topology}\label{CTT}

Let $G$ be a Lie group and $n$ a positive integer. The space $Hom(\mathbb Z^n, G)$ can be identified with the set $G^{(n)}$ of $n$-tuples of commuting matrices (described in the beginning of Section~\ref{CP}) in $G$ as follows: for $f\colon \Z^n \rightarrow G$ we associate $(g_1,\ldots, g_n) \in G^{(n)}$, where $g_i= f(0,\ldots, \underbrace{1}_{i^{th}}, \ldots, 0)$. The space $Hom(\mathbb Z^n, G)$ acquires topology from $G^n$. We refer to the thesis of Stafa~\cite{St} for detailed exposition on this. Now, $G$ acts on $Hom(\Z^n, G)$ by conjugation (which amounts to the simultaneous conjugation action of $G$ on $G^{(n)}$). We denote it by $\rep(\Z^n, G) = G\backslash Hom(\Z^n, G)$. Thus, calculation of size of $\rep(\Z^n, G)$ is the same as the enumeration of simultaneous conjugacy classes of $n$-tuples of commuting elements in $G$. The case when $G$ is a finite classical group has been discussed in Section~\ref{BR}. 

There are two broad questions considered here: (i) parametrising  $Hom(\mathbb Z^n, G)$ and $\rep(\mathbb Z^n, G)$, (ii) the topological nature of these spaces, such as, connected components, homology etc. We briefly describe the work done in this direction without going into the technical details, and refer an interested reader to various articles and references therein mentioned in this section.

The subject originated from the work of Edward Witten~\cite{Wi1, Wi2}, who in his study of supersymmetry and guage theory studied the set $Hom(\Z^n,G)$ for $n = 2$ and $3$. Borel, Friedmann and Morgan~\cite{BFM} studied ``almost'' commuting pairs and triples of a compact group $G$ (i.e., commuting modulo some subgroup of the centre). They also analysed conjugacy classes of triples of commuting elements of simple groups. In \cite{AC}, Adem and Cohen looked more generally at $Hom(\pi, G)$, where $\pi$ is a finitely generated abelian group and $G$ is a Lie group. For $\pi$, a free abelian group of rank $n$ (for example, $\mathbb Z^n$), they determined explicitly the cohomology of the spaces $Hom(\pi, G)$ for $\pi$ of rank $2$ and $3$. They looked at the orthogonal groups, the special orthogonal and special unitary groups. In~\cite{ACG}, Adem, Cohen and Gomez studied the space $Hom(\Z^n, G_{m,p})$, where $G_{m,p} = SU_p(F)^m/\Delta(\Z/p)$, a central product of $m$- copies of $SU_p(F)$, where $p$ is a prime. They determined the number of path-connected components of $Hom(\Z^n, G_{m,P})$. 

In~\cite{AG}, Adem and Gomez looked at certain Lie groups, namely finite products of the classical groups, $SU_d(F)$, $U_d(F)$, and $Sp_{2l}(F)$. They looked at $Hom(\pi, G)$ for $\pi$ a more general finitely generated abelian group, i.e., $\pi = \Z^n \oplus A$, where $A$ is a finite abelian group, and $G$ is a finite product of the earlier mentioned classical groups. 
In~\cite{TS}, Giese and Sjerve calculated the fundamental groups of the connected components of the homomorphism spaces $Hom(\Z^n,G)$. They determined these for, $SU_2$, $U_2$ (which are connected), and for the connected components of $Hom(\Z^n, G)$ in particular. The proved that the fundamental groups of these are $\Z^n$ in the case of $U_2$; $0$ for $SU_2$, and  $\Z_2^n$, and the quaternion ring $Q_8$ for the various connected components of $Hom(\Z^n, SO_3)$. In his thesis \cite{St}, Stafa dealt with spaces of commuting elements of Lie groups, along with polyhedral products. He focused on determining all the homologies of the homomorphism space $Hom(\Z^n,G)$ for a Lie group $G$ all at once. In 2014, Rojo~\cite{Ro} looked at $Hom(\Z^n, O_d)$, and did a precise enumeration of the number of connected components, and showed that $\rep(\Z^n,O_d)$ and $Hom(\Z^n,O_d)$ have the same number of connected components. He also did a precise calculation of the number of connected components of $Hom(\Z^n, GL_d(\R))$, and of $Hom(\Z^n, SO_d)$.

In \cite{PS}, Pettet and Souto looked at  a group $G$ of complex or real points in a reductive group, and showed that for any maximal compact subgroup $H$ of $G$, and positive integer $k$, there is a weak retraction from $Hom(\Z^n, G)$ to $Hom(\Z^n,H)$. They necessary sufficient conditions on $n$ and the connectedness of a group $G$ of complex points of a semi-simple algebraic group, for the space $Hom(\Z^n, G)$ to be  connected. They also looked at the fundamental group of the connected component in $Hom(\Z^n, G)$ of the trivial representation, for a reductive group $G$.

%%%%%%%%%%%%%%%%%%%

\end{document}